\newcommand{\cU}{\mathcal{U}}
\newcommand{\X}{\mathcal{X}}
\newtheorem{theorem}{Theorem}
\newtheorem{lemma}[theorem]{Lemma}
\newtheorem{corollary}[theorem]{Corollary}
\newcommand{\BIGOP}[1]{\mathop{\mathchoice%
{\raise-0.22em\hbox{\huge $#1$}}%
{\raise-0.05em\hbox{\Large $#1$}}{\hbox{\large $#1$}}{#1}}}
\begin{document}

\title{Mixed Uncertainty Sets for Robust Combinatorial Optimization}

\author[1]{Trivikram Dokka\thanks{Email: t.dokka@lancaster.ac.uk}}
\author[2]{Marc Goerigk\thanks{Email: marc.goerigk@uni-siegen.de}}
\author[1]{Rahul Roy\thanks{Corresponding author. Email: r.roy@lancaster.ac.uk}}

\affil[1]{Department of Management Science, Lancaster University, United Kingdom}
\affil[2]{Network and Data Science Management, University of Siegen, Germany}

\date{}

\maketitle

\abstract{In robust optimization, the uncertainty set is used to model all possible outcomes of uncertain parameters. In the classic setting, one assumes that this set is provided by the decision maker based on the data available to her. Only recently it has been recognized that the process of building useful uncertainty sets is in itself a challenging task that requires mathematical support.

In this paper, we propose an approach to go beyond the classic setting, by assuming multiple uncertainty sets to be prepared, each with a weight showing the degree of belief that the set is a ''true'' model of uncertainty. We consider theoretical aspects of this approach and show that it is as easy to model as the classic setting. In an extensive computational study using a shortest path problem based on real-world data, we auto-tune uncertainty sets to the available data, and show that with regard to out-sample performance, the combination of multiple sets can give better results than each set on its own.
}

\textbf{Keywords: } robust optimization; combinatorial optimization; uncertainty modeling; computational study

\section{Introduction}

In this paper we consider combinatorial problems of the form
\[ \min_{\pmb{x}\in\X} \pmb{c} \pmb{x} \tag{P} \]
with $\X\subseteq\{0,1\}^n$ and uncertain cost vector $\pmb{c}$. To find a solution $\pmb{x}$ that still performs well under the possible cost realizations, different approaches have been proposed. These include fuzzy optimization \cite{kasperski2010minmax}, stochastic programming \cite{birge2011introduction}, or robust optimization \cite{goerigk2016algorithm,gabrel2014recent}.

In the robust optimization approach, we assume that all possible cost realizations $\pmb{c}$ are modelled through a so-called uncertainty set $\cU$, and we want to protect against all outcomes without knowledge of a probability distribution. The resulting worst-case problem is then of the form
\[ \min_{\pmb{x}\in\X} \max_{\pmb{c}\in\cU} \pmb{c}\pmb{x} \]
Special cases of this kind have been investigated thoroughly (see \cite{kasperski2016robust} for a recent overview on robust combinatorial optimization). A common assumption in these settings is that the shape of the uncertainty set $\cU$ is known, e.g., one assumes that the uncertainty is interval-based
\[ \cU=\prod_{i\in[n]}[\underline{c}_i,\overline{c}_i], \]
discrete
\[ \cU = \left\{ \pmb{c}^1,\ldots,\pmb{c}^K \right\} \]
or ellipsoidal
\[ \cU = \left\{ \pmb{c}\in\mathbb{R}^n_+ : (\pmb{c}-\hat{\pmb{c}})\pmb{\Sigma}^{-1}(\pmb{c}-\hat{\pmb{c}}) \le \lambda \right\} \]
Note that in the min-max setting, using a discrete uncertainty set is equivalent to using its convex hull $\cU = conv\left(\left\{ \pmb{c}^1,\ldots,\pmb{c}^K \right\}\right)$.
So far, comparatively little research has investigated how a decision maker can actually come up with an uncertainty set that produces a robust solution in accordance with his or her wishes. In the data-driven approach \cite{bertsimas2018data}, we do not assume an uncertainty set to be given, but only data observations, which are usually discrete. These are then used to construct uncertainty sets, e.g, using different approaches from statistical testing.

In \cite{chassein2018variable,chassein2018compromise}, the authors considered a setting in which the shape of the uncertainty set is given, but not its size. Models are introduced by which compromise robust solutions can be found, which perform well on average over all considered uncertainty sizes. Furthermore, in \cite{crespi2018robust} the sensitivity of robust solutions to the uncertainty size was considered.

In the recent paper \cite{CHASSEIN2018}, real-world data modeling traffic speeds in the city of Chicago were used to test different uncertainty sets for shortest path problems experimentally. In particular, discrete uncertainty and ellipsoidal uncertainty sets were found to produce a good trade-off on an out-of-sample evaluation set of scenarios with respect to average performance, worst-case performance and the conditional value at risk (CVaR) criterion.

In this paper, we introduce a novel approach to handle uncertainty in robust optimization. This mixed-uncertainty setting is a direct generalization of the classic robust optimization approach, where we protect against multiple uncertainty sets simultaneously. We demonstrate that this approach is well-suited to data-driven settings, where the decision maker is not able to determine the shape and size of uncertainty a priory. By using \textit{irace} \cite{lopez2011irace}, a software designed to tune algorithms automatically, we determine the best-performing combination of uncertainty sets for the same Chicago test data as used in \cite{CHASSEIN2018}. Our proposed approach is generally applicable, and possible to implement using off-the-shelf software with little theoretical knowledge from the decision maker, making it flexible and attractive for practical purposes.

The rest of this paper is structured as follows. In Section~\ref{sec:mixed}, we formally introduce our setting of mixed uncertainty sets for uncertain combinatorial problems. We demonstrate how to find compact mixed-integer programming models, and point out cases solvable in polynomial time. Section~\ref{sec:exp} presents an extensive computational case study, highlighting the usefulness of our approach for a real-world shortest path problem. We conclude this paper in Section~\ref{sec:conclusions} and point out further research questions.

\section{Mixed Uncertainty Sets}
\label{sec:mixed}

In classic min-max robust combinatorial optimization, we consider the problem
\[ \min_{\pmb{x}\in\X} \max_{\pmb{c}\in\cU} \pmb{c}\pmb{x} \tag{RP}\]
where the uncertainty set $\cU$ contains all possible outcomes of the cost vector $\pmb{c}$. As a direct generalization, it is possible that multiple uncertainty sets $\cU_1,\ldots,\cU_N$ need to be considered. For each $j\in[N]$, we are given a weight $p_j$ that denotes the importance to protect against $\cU_j$, or likelihood of its occurrence.

The resulting weighted robust optimization is then as follows:
\[ \min_{\pmb{x}\in\X} \sum_{j\in[N]} p_j \max_{\pmb{c}\in\cU_j} \pmb{c}\pmb{x} \tag{WRP} \]

There are different settings where the application of a model of the form (WRP) can be useful. For example: (1) We receive different forecasts for future developments of costs, each providing a set of most likely scenarios. Instead of using only one of these sets or merging them, we consider a weighted robust problem, where we assign each forecast an expert estimate $p_j$ whether it can be trusted. (2) We create different uncertainty sets based on different degrees of risk-willingness. We then find a single compromise solution against all levels of risk. For example, one uncertainty set may cover the worst 80\% of outcomes of a multivariate normal distribution, whereas another uncertainty set covers 95\%. We can find a single solution hedging against both worst cases with prescribed weights. (3) We have an original set of observations for our uncertain data, and do not know which shape of uncertainty set may be appropriate for the problem at hand.  Selecting the (right) uncertainty set is itself a decision problem under uncertainty. So far, researchers left this problem to the user/decision-maker and developed approaches for robust optimization after this decision is made offering different choices to this decision. However, this itself can be posed as an optimization or a learning problem. In other words, ideally, we would want to learn the right uncertainty set from data, which may be a mix of many known sets. Also, automatically learning a mixed uncertainty set allows the possibility for the set to be dynamic which can change with a change in the random nature of the underlying uncertain parameters.

Mixing uncertainty sets and tuning the mixing (hyper) parameters to learn the best mix is a challenging task. Research in this direction is in its early stages, the only work to have experimented in this direction is that of \cite{Campbell15_ACC} where authors consider combining ellipsoidal sets within a Bayesian setting for robust linear programs.  Moreover, even under the assumption that we can formulate this learning task and embed it in a holistic algorithmic framework for data-driven robust optimization, there is no guarantee that mixing sets will help in improved robust solutions for any choice of robustness measures. In this spirit, (WRP) is a first step towards automating selection of the right uncertainty set. In this work we would like to investigate if mixing (known) sets can give better robust solutions.

We now consider the complexity of (WRP), and possibilities to formulate it using mixed-integer programming. Note that (WRP) can be written as a compact optimization problem if that is possible for each $\cU_i$, by combing each model. As an example, if $\cU_1$ is a budgeted uncertainty set of the form $\cU_1 = \{ \pmb{c} : c_i = \underline{c}_i + z_i \overline{c}_i, \sum_{i\in[n]} z_i \le \Gamma, z_i\in\{0,1\} \}$ (see \cite{bertsimas2003robust}), then the robust problem (RP) can be written as
\begin{align*}
\min\ &\sum_{i\in[n]} \underline{c}_i x_i + \Gamma\pi + \sum_{i\in[n]} \rho_i \\
\text{s.t. } & \pi + \rho_i \ge  (\overline{c}_i - \underline{c}_i)x_i & \forall i\in[n] \\
& \pmb{x} \in \X
\end{align*}
If $\cU_2$ is a polyhedral uncertainty set of the form $\cU_2 = \{ \pmb{c} : V\pmb{c} \le \pmb{d}, \pmb{c}\ge \pmb{0}\}$, then its robust problem (RP) can be written as
\begin{align*}
\min\ & \pmb{d} \pmb{\alpha} \\
\text{s.t. } & V^t\pmb{\alpha} \ge \pmb{x} \\
& \pmb{\alpha} \ge \pmb{0} \\
& \pmb{x} \in \X
\end{align*}
Combining both uncertainty sets in a weighted robust problem (WRP) using weights $p_1$ for budgeted uncertainty and $p_2$ for polyhedral uncertainty, the combination of models yields
\begin{align*}
\min\ & p_1(\sum_{i\in[n]} \underline{c}_i x_i + \Gamma\pi + \sum_{i\in[n]} \rho_i) + p_2 \pmb{d} \pmb{\alpha} \\
\text{s.t. } & \pi + \rho_i \ge  (\overline{c}_i - \underline{c}_i)x_i & \forall i\in[n] \\
& V^t\pmb{\alpha} \ge \pmb{x} \\
& \pmb{x} \in \X
\end{align*}

As (WRP) is an generalization of (RP), it is at least as hard. Hence, cases of (WRP) that involve uncertainty sets $\cU_i$ for which we already know that the classic robust optimization problem is NP-hard, will be NP-hard as well.

We therefore focus on cases where (RP) is still polynomially solvable. First, notice that when $\cU_1,\ldots,\cU_N$ are interval uncertainty sets of the form $\cU_j = \prod_{i\in[n]} [\underline{c}^j_i,\overline{c}^j_i]$, then
\[ \sum_{j\in[N]} p_j \max_{\pmb{c}\in\cU_j} \pmb{c}\pmb{x} = \sum_{i\in[n]} (\sum_{j\in[N]} p_j\overline{c}^j_i)x_i \]
which is a problem of nominal type. Hence, (WRP) with interval sets has the same complexity as the nominal problem (P).

Now let $\cU_1,\ldots,\cU_N$ be budgeted uncertainty sets with $\cU_j = \{ \pmb{c} : c_i = \underline{c}^j_i + z_i \overline{c}^j_i, \sum_{i\in[n]} z_i \le \Gamma^j, \pmb{z}\in\{0,1\}^n \}$. We can write (WRP) as
\begin{align*}
\min\ & \sum_{j\in[N]} p_j(\sum_{i\in[n]} \underline{c}^j_i x_i + \Gamma^j\pi^j + \sum_{i\in[n]} \rho^j_i) \tag{B-WRP}\\
\text{s.t. } & \pi^j + \rho^j_i \ge  (\overline{c}^j_i - \underline{c}^j_i)x_i & \forall i\in[n],j \in[N] \\
& \pmb{x} \in \X
\end{align*}
\begin{lemma}\label{wrpbudgeted}
There is an optimal solution to (B-WRP), where for all $j\in[N]$, we have $\pi^j = \overline{c}^j_{i(j)} - \underline{c}^j_{i(j)}$ for some $i(j)\in[n]$, or $\pi^j = 0$.
\end{lemma}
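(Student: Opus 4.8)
The plan is to fix an optimal $\pmb{x}^\ast\in\X$ and then reason only about the remaining variables $\pi^j,\rho^j_i$, which (arising from LP duality of the inner maximisation over the budgeted set $\cU_j$) carry the implicit nonnegativity constraints $\pi^j\ge 0$ and $\rho^j_i\ge 0$. For fixed $\pmb{x}^\ast$ the objective of (B-WRP) separates completely over $j\in[N]$: the weights $p_j\ge 0$ are constants and no constraint couples variables belonging to different indices $j$. It therefore suffices to show, for each fixed $j$ with $p_j>0$, that the block subproblem
\[ \min\Big\{ \Gamma^j\pi^j + \sum_{i\in[n]}\rho^j_i \ :\ \pi^j+\rho^j_i\ge \delta^j_i x^\ast_i\ \ \forall i,\ \pi^j\ge 0,\ \rho^j_i\ge 0\Big\} \]
admits an optimal solution with $\pi^j\in\{0\}\cup\{\delta^j_i:i\in[n]\}$, where I abbreviate $\delta^j_i:=\overline{c}^j_i-\underline{c}^j_i$ (blocks with $p_j=0$ are free and can simply take $\pi^j=0$).

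Next I would eliminate the $\rho^j_i$ variables. For any fixed $\pi^j\ge 0$, the only lower bounds on $\rho^j_i$ are $\rho^j_i\ge\delta^j_i x^\ast_i-\pi^j$ and $\rho^j_i\ge 0$, while the objective is increasing in each $\rho^j_i$; hence the optimal choice is $\rho^j_i=\max\{0,\delta^j_i x^\ast_i-\pi^j\}$. Substituting this back reduces the block to minimising the univariate function
\[ f_j(\pi^j)=\Gamma^j\pi^j+\sum_{i\in[n]}\max\{0,\delta^j_i x^\ast_i-\pi^j\} \]
over $\pi^j\ge 0$.

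The key observation is then that $f_j$ is convex and piecewise linear, with kinks occurring only at the arguments $\pi^j=\delta^j_i x^\ast_i$. Because $x^\ast_i\in\{0,1\}$, each such kink equals either $0$ (when $x^\ast_i=0$) or $\delta^j_i$ (when $x^\ast_i=1$); together with the boundary point $\pi^j=0$ this yields a candidate set contained in $\{0\}\cup\{\delta^j_i:i\in[n]\}$. Since the slope of $f_j$ to the right of its largest kink equals $\Gamma^j\ge 0$, the function is nondecreasing there, so moving $\pi^j$ beyond the largest kink cannot improve the objective; combined with convexity this shows that a minimiser can always be taken at one of the candidate breakpoints (or at $0$). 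This gives $\pi^j=0$ or $\pi^j=\delta^j_{i(j)}=\overline{c}^j_{i(j)}-\underline{c}^j_{i(j)}$ for some index $i(j)$, and assembling these choices over all $j$ produces the claimed optimal solution.

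I expect the only points requiring care to be the implicit nonnegativity of $\pi^j$ and $\rho^j_i$ (without which the reduced objective would be unbounded below, and its breakpoint structure would change), and the degenerate regime $\Gamma^j=0$. In that degenerate case $f_j$ is merely nonincreasing and eventually flat, so I would argue that the minimum is still attained at the largest kink $\max_{i}\delta^j_i x^\ast_i$, which remains of the required form, leaving the conclusion unaffected. The breakpoint and coercivity reasoning is thus the main thing to state precisely, but it is entirely standard for Bertsimas--Sim-type robust counterparts.
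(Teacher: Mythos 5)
Your proposal is correct and follows the same route as the paper: fix an optimal $\pmb{x}$, observe that the problem in $(\pi^j,\pmb{\rho}^j)$ decouples into $N$ independent blocks, and reduce each block to the classic single-set budgeted case. The only difference is that where the paper simply cites Bertsimas--Sim for the per-block claim, you work it out explicitly (eliminating $\pmb{\rho}^j$, then minimizing the convex piecewise-linear function over its breakpoints, correctly noting the implicit nonnegativity of $\pi^j$ and $\rho^j_i$ from the dualization), which is exactly the argument being invoked.
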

\begin{proof}
For some fixed $\pmb{x}$, the remaining optimization problem in $\pi^j$ and $\pmb{\rho}^j$ can be decomposed in to $N$ independent sub-problems. For each sub-problem, the result is known from the classic budgeted uncertainty case (see \cite{bertsimas2003robust}).
\end{proof}

\begin{theorem}
For constant $N$, (B-WRP) can be solved in polynomial time, if (P) can be solved in polynomial time.
\end{theorem}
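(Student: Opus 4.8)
The plan is to enumerate over the possible values of the protection variables $\pi^j$ and, for each fixed choice, to reduce (B-WRP) to a nominal problem of type (P). By Lemma~\ref{wrpbudgeted}, in some optimal solution each $\pi^j$ lies in the finite candidate set
\[ C_j := \{0\} \cup \{\,\overline{c}^j_i - \underline{c}^j_i : i\in[n]\,\}, \]
so that $|C_j|\le n+1$, and crucially $C_j$ does not depend on $\pmb{x}$. The algorithm therefore iterates over all vectors $\pmb{\pi}=(\pi^1,\ldots,\pi^N)\in C_1\times\cdots\times C_N$, of which there are at most $(n+1)^N$; for constant $N$ this is polynomial in $n$.

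First I would fix a candidate vector $\pmb{\pi}$ and eliminate the variables $\rho^j_i$. In a minimizing solution of (B-WRP) the constraints force $\rho^j_i = \max\{0,\,(\overline{c}^j_i - \underline{c}^j_i)x_i - \pi^j\}$, and since $x_i\in\{0,1\}$ and $\pi^j\ge 0$ this simplifies to $\rho^j_i = x_i\,\max\{0,\,\overline{c}^j_i - \underline{c}^j_i - \pi^j\}$. Substituting back, the objective becomes $\sum_{j\in[N]} p_j\Gamma^j\pi^j + \sum_{i\in[n]} \tilde c_i(\pmb{\pi})\,x_i$, where the first term is a constant and
\[ \tilde c_i(\pmb{\pi}) := \sum_{j\in[N]} p_j\bigl(\underline{c}^j_i + \max\{0,\,\overline{c}^j_i - \underline{c}^j_i - \pi^j\}\bigr). \]
Hence, for fixed $\pmb{\pi}$, the residual problem is $\min_{\pmb{x}\in\X}\tilde{\pmb{c}}(\pmb{\pi})\pmb{x}$ plus a constant, which is exactly an instance of (P) and can be solved in polynomial time by assumption.

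Next I would argue correctness of the enumeration. For a fixed $\pmb{x}$, Lemma~\ref{wrpbudgeted} states that minimizing over the feasible $(\pi^j,\rho^j_i)$ is attained with each $\pi^j\in C_j$; after eliminating $\rho$ as above, minimizing over the full feasible region of the protection variables coincides with minimizing over $\pmb{\pi}\in C_1\times\cdots\times C_N$. Taking the outer minimum over $\pmb{x}\in\X$ and interchanging the two minimizations — legitimate because the candidate sets $C_j$ are independent of $\pmb{x}$ — yields
\[ v^\ast = \min_{\pmb{\pi}\in C_1\times\cdots\times C_N}\ \Bigl(\sum_{j\in[N]} p_j\Gamma^j\pi^j + \min_{\pmb{x}\in\X}\tilde{\pmb{c}}(\pmb{\pi})\pmb{x}\Bigr), \]
where $v^\ast$ denotes the optimal value of (B-WRP). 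Thus returning the best solution found across all $(n+1)^N$ candidate vectors gives the global optimum, and the total work is $(n+1)^N$ times the cost of solving (P), which is polynomial for constant $N$.

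The main obstacle is not the running-time bookkeeping but justifying the two reductions cleanly. The first is that fixing $\pmb{\pi}$ allows one to eliminate $\rho$ and collapse the objective into a single modified linear cost $\tilde{\pmb{c}}(\pmb{\pi})$; this uses the integrality $x_i\in\{0,1\}$ in an essential way, since otherwise $\rho^j_i$ would not factor through $x_i$. The second is that the minimization over $\pmb{\pi}$ may be interchanged with the minimization over $\pmb{x}$, which hinges precisely on the candidate set $C_j$ being a fixed finite set that does not vary with $\pmb{x}$, so that the problem decomposes into $(n+1)^N$ independent nominal problems.
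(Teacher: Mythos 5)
Your proposal is correct and follows essentially the same route as the paper's proof: invoke Lemma~\ref{wrpbudgeted} to restrict each $\pi^j$ to the $(n+1)$ candidate values, enumerate the $(n+1)^N$ combinations, factor $\left[(\overline{c}^j_i-\underline{c}^j_i)x_i-\pi^j\right]_+ = \left[\overline{c}^j_i-\underline{c}^j_i-\pi^j\right]_+ x_i$ using binarity of $x_i$, and solve a nominal instance of (P) for each fixed $\pmb{\pi}$. Your added justification of the exchange of the two minimizations (valid since the candidate sets $C_j$ do not depend on $\pmb{x}$) is a detail the paper leaves implicit, but it does not change the argument.
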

\begin{proof}
We rewrite (B-WRP) as
\begin{align*}
\min\ &\sum_{j\in[N]} p_j \left( \sum_{i\in[n]} \underline{c}^j_ix_i + \Gamma^j\pi^j + \sum_{i\in[n]} \left[(\overline{c}^j_i-\underline{c}^j_i)x_i-\pi^j\right]_+ \right) \\
\text{s.t. } & \pmb{x}\in\X
\end{align*}
where $[y]_+ = \max\{y,0\}$. Note that
\[  \left[(\overline{c}^j_i-\underline{c}^j_i)x_i-\pi^j\right]_+ = \left[\overline{c}^j_i-\underline{c}^j_i-\pi^j\right]_+ x_i\]
Using Lemma~\ref{wrpbudgeted}, we enumerate all $(n+1)^N$ options for the $\pi^j$ variables. For fixed values of $\pmb{\pi}$, (B-WRP) reduces to
\[ \min_{\pmb{x}\in\X} \sum_{i\in[n]} \left(\sum_{j\in[N]} p_j \left(\underline{c}^j_i + \left[\overline{c}^j_i-\underline{c}^j_i-\pi^j\right]_+ \right) \right)  x_i + \sum_{j\in[N]} p_j\Gamma^j\pi^j,\]
which is of the nominal type.
\end{proof}

Note that this approach cannot be used if $N$ is unbounded (i.e., part of the input). For this case, we can still identify cases that are tractable in the following.

For $\pmb{x}\in\X$, denote by $X = \{i\in[n]: x_i=1\}$ the corresponding set of chosen items. We rewrite the objective of (WRP) as
\[ f(X) = \sum_{j\in[N]} p_j \left( \sum_{i\in X} \underline{c}^j_i + wc^j(X) \right) \]
where $wc_j(X)$ denotes the sum of the $\Gamma^j$ largest values $d^j_i:=\overline{c}^j_i-\underline{c}^j_i$ for $i\in X$, i.e., 
 \[ wc^j(X) = \max \left\{ \sum_{i\in X} d^j_i z_i : \sum_{i\in[n]} z_i \le \Gamma^j, \pmb{z}\in\{0,1\}^n \right\} \]

\begin{theorem}\label{submod}
Function $f$ is submodular.
\end{theorem}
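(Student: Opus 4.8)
The plan is to exploit two standard closure properties: submodularity is preserved under nonnegative linear combinations, and adding a modular (purely linear) function does not affect it. Since
\[ f(X) = \sum_{j\in[N]} p_j\Big(\sum_{i\in X}\underline{c}^j_i + wc^j(X)\Big) \]
with all weights $p_j\ge 0$, and since each inner linear term $\sum_{i\in X}\underline{c}^j_i$ is modular, it suffices to prove that each ``top-$\Gamma^j$'' function $wc^j$ is submodular. I would fix one index, drop the superscript, and write $g(X)=wc^j(X)$ for the sum of the $\Gamma$ largest values $d_i:=d^j_i$ among $i\in X$; note that all $d_i=\overline{c}^j_i-\underline{c}^j_i\ge 0$, so the maximization defining $wc^j$ never picks up a negative contribution.

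To verify the diminishing-returns characterization of submodularity, I would compute the marginal gain of inserting an element $i\notin A$ into a set $A$. Define the threshold $\tau(A)$ to be the $\Gamma$-th largest of the values $\{d_k:k\in A\}$ when $|A|\ge\Gamma$, and $\tau(A):=0$ when $|A|<\Gamma$ (an empty slot in the top-$\Gamma$ is filled for free). A short case analysis then yields the clean formula
\[ g(A\cup\{i\}) - g(A) = \max\{0,\, d_i-\tau(A)\}, \]
because $i$ enters the top-$\Gamma$ only if its value exceeds the current $\Gamma$-th largest, and in that case it displaces exactly that value.

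The crux is to show that $\tau$ is monotone nondecreasing under inclusion: if $A\subseteq B$, then $\tau(A)\le\tau(B)$. This holds because every element of $A$ also lies in $B$, so $B$ contains at least as many values that are $\ge\tau(A)$ as $A$ does; hence the $\Gamma$-th largest value of $B$ cannot fall below that of $A$, and the boundary case $|A|<\Gamma$ is covered by $\tau(A)=0\le\tau(B)$. I expect this monotonicity argument---in particular a careful treatment of ties and of the transition between $|A|<\Gamma$ and $|A|\ge\Gamma$---to be the main technical point of the proof.

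Combining these two facts closes the argument. For $A\subseteq B$ and $i\notin B$, the map $t\mapsto\max\{0,d_i-t\}$ is nonincreasing, so $\tau(A)\le\tau(B)$ gives
\[ g(A\cup\{i\}) - g(A) = \max\{0,d_i-\tau(A)\} \ge \max\{0,d_i-\tau(B)\} = g(B\cup\{i\}) - g(B), \]
which is exactly the diminishing-returns inequality. Thus each $wc^j$ is submodular, and therefore $f$ is submodular as a nonnegative combination of submodular and modular functions.
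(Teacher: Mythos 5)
Your proof is correct, and it takes a genuinely different route from the paper's. Both proofs share the same (essentially forced) reduction: since nonnegative combinations of submodular functions are submodular and the terms $\sum_{i\in X}\underline{c}^j_i$ are modular, everything rests on showing each top-$\Gamma$ function $wc^j$ is submodular. But where you verify the \emph{diminishing-returns} characterization via an explicit marginal-gain formula $wc^j(A\cup\{i\})-wc^j(A)=\max\{0,\,d_i-\tau(A)\}$ and monotonicity of the threshold $\tau$, the paper verifies the \emph{union--intersection} inequality $wc^j(S)+wc^j(T)\ge wc^j(S\cup T)+wc^j(S\cap T)$ directly, by an exchange/charging argument on the optimal top-$\Gamma$ selections $\ell(\cdot)$: it observes $\ell(S\cup T)\subseteq\ell(S)\cup\ell(T)$ and then charges each $i\in\ell(S\cap T)$ either to its own copies in $\ell(S)$ and $\ell(T)$, or (if $i\notin\ell(S\cup T)$) to some element $k\in(\ell(S)\cup\ell(T))\setminus\ell(S\cup T)$ with $d^j_k\ge d^j_i$. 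Your approach buys a fully explicit, mechanically checkable computation in which ties and the transition at $|A|=\Gamma$ are absorbed into the formula; its one dependency is the assumption $d_i\ge 0$ (natural here, since $\overline{c}^j_i\ge\underline{c}^j_i$), without which the marginal formula with $\tau(A)=0$ for $|A|<\Gamma$ would fail. The paper's approach treats arbitrary pairs $S,T$ in one stroke and needs no threshold bookkeeping, but as written it is terser: the injectivity of the charging map (that distinct displaced elements of $\ell(S\cap T)$ are matched to distinct elements of $(\ell(S)\cup\ell(T))\setminus\ell(S\cup T)$) is left implicit, which is precisely the detail your threshold argument makes unnecessary.
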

\begin{proof}
We first show that $wc^j$ is a submodular function. As the sum of submodular functions is still submodular, it follows that $f$ is submodular.

To see that $wc^j$ is submodular, let $S,T\subseteq [n]$ be given. Let $\ell(X)$ be the up to $\Gamma^j$ items that maximize $\sum_{i\in \ell(X)} d^j_i$. Note that $\ell(S\cup T) \subseteq \ell(S) \cup \ell(T)$, while $\ell(S\cap T)$ is not necessarily in $\ell(S) \cup \ell(T)$. Figure~\ref{submod} illustrates these relationships.

\begin{figure}[htbp]
\begin{center}
\includegraphics[width=.4\textwidth]{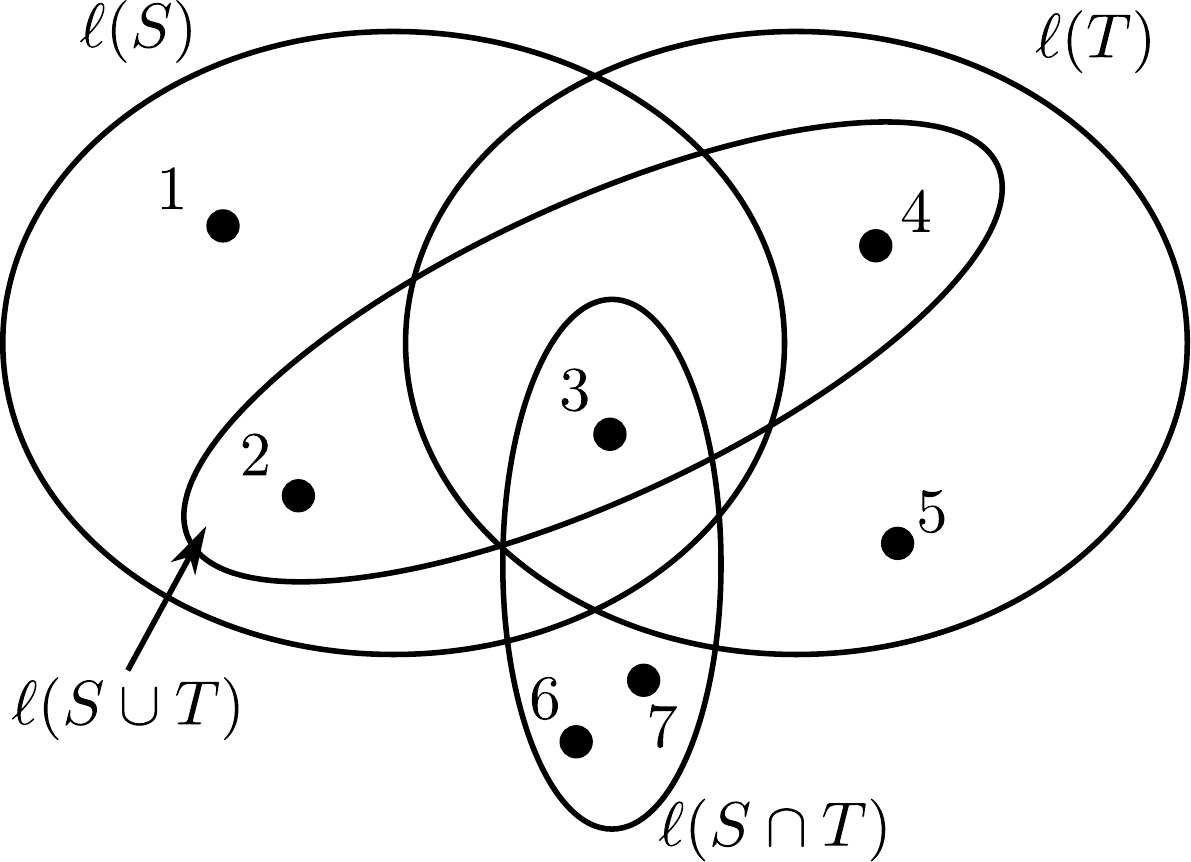}
\caption{Illustration for the proof of Theorem~\ref{submod}.}
\end{center}
\end{figure}

For any element $i\in \ell(S\cap T)$, one of two cases hold: If $i\in \ell(S\cup T)$, then also $i\in\ell(S)$ and $i\in\ell(T)$. If $i\notin \ell(S\cup T)$, then there is an element $k\in(\ell(S)\cup\ell(T))\setminus\ell(S\cup T)$ with $d^j_k \ge d^j_i$. Hence,
\[ wc^j(S) + wc^j(T) \ge wc^j(S\cup T) + wc^j(S\cap T) \]
and the claim follows.
\end{proof}
 
\begin{corollary}\label{submodcor}
For problems where $\X$ is a matroid, (B-WRP) can be solved in polynomial time, even for unbounded $N$.
\end{corollary}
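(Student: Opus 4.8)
The plan is to read (B-WRP) as the submodular minimization problem $\min_{\pmb{x}\in\X} f(X)$, with $f$ the objective shown submodular in Theorem~\ref{submod}, and to exploit the matroid structure of $\X$ together with the decomposition of $f$ into a modular part plus the budget terms $wc^j$. Since the costs are nonnegative, $f$ is monotone nondecreasing, so over the \emph{independent} sets of the matroid the minimizer is trivially $\emptyset$; the meaningful feasible region is therefore the set of \emph{bases}, and I would state the goal as computing $\min\{f(B): B \text{ a basis}\}$. The first ingredient I would set up is the Bertsimas--Sim reformulation underlying Lemma~\ref{wrpbudgeted}, namely $wc^j(X)=\min_{\pi^j\ge 0}\big(\Gamma^j\pi^j+\sum_{i\in X}[d^j_i-\pi^j]_+\big)$, giving $f(X)=\min_{\pmb\pi\ge\pmb 0}\big(\sum_{j}p_j\Gamma^j\pi^j+\sum_{i\in X}w_i(\pmb\pi)\big)$ with $w_i(\pmb\pi)=\sum_j p_j(\underline c^j_i+[d^j_i-\pi^j]_+)$. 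For \emph{fixed} $\pmb\pi$ the inner problem is modular, so its minimum over bases is delivered by the matroid greedy algorithm; this is exactly the mechanism that makes the constant-$N$ theorem work by enumerating the $(n+1)^N$ threshold vectors.

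For unbounded $N$ the enumeration of $\pmb\pi$ is no longer available, so the plan would be to avoid fixing $\pmb\pi$ and instead minimize $f$ directly over the base polytope $P$ (the convex hull of all bases). Concretely I would form the Lov\'asz extension $\hat f$, which is convex because $f$ is submodular and is evaluable in polynomial time from a value oracle for $f$, and minimize the convex function $\hat f$ over $P$ by the ellipsoid method, using the polynomial separation/linear-optimization oracle available for any matroid base polytope. If the continuous optimum were always attained at a vertex of $P$, an integral optimal basis would follow and the corollary would be proved.

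The step I expect to be the main obstacle---and where I would concentrate all scrutiny---is precisely this last claim, that constrained submodular minimization over matroid \emph{bases} reduces to its continuous relaxation. Unlike unconstrained minimization over the box $[0,1]^n$, where the Lov\'asz extension attains its minimum at a $0/1$ point, the minimum of a convex function over $P$ need not occur at a vertex, and submodularity of $f$ alone is \emph{not} enough to make the corollary true. To see the difficulty in the present setting, take every $\Gamma^j=1$, so that $wc^j(X)=\max_{i\in X}d^j_i$, set $\underline c^j_i\equiv 0$ and $d^j_i\in\{0,1\}$, and let $\X$ be the uniform matroid whose bases are the $k$-element subsets of $[n]$. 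Then $f(X)=\sum_{j}\max_{i\in X}d^j_i=\big|\bigcup_{i\in X}S_i\big|$ with $S_i=\{j:d^j_i=1\}$, so minimizing $f$ over bases is exactly the \emph{Min-$k$-Union} problem, which is NP-hard (e.g.\ by a reduction from densest-$k$-subgraph on regular graphs). Hence a correct proof cannot follow from Theorem~\ref{submod} alone: any valid argument would have to restrict $\X$ well beyond ``arbitrary matroid'' or exploit structure of $f$ that is absent here. I would therefore flag the bridge from ``$f$ is submodular'' to ``(B-WRP) is polynomial over matroid bases'' as the point most in need of repair, rather than expect a clean one-line corollary.
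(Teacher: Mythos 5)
Your analysis is correct, and the step you flag as ``most in need of repair'' is in fact the entirety of the paper's argument: the paper gives no proof of Corollary~\ref{submodcor} at all, but states it as an immediate consequence of Theorem~\ref{submod}. In other words, the paper's implicit reasoning is exactly the invalid bridge you identify, namely that submodularity of $f$ plus a matroid feasible set yields polynomial solvability. That inference is false in general: unconstrained submodular minimization is polynomial, but minimizing a submodular function over the \emph{bases} of a matroid is not (minimum graph bisection --- minimizing the cut function over the bases of a rank-$n/2$ uniform matroid --- is the classical NP-hard example), while over the downward-closed family of independent sets the problem is vacuous, since $f$ is monotone and $\emptyset$ is optimal, as you observe.

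Your counterexample is the stronger contribution, because it shows the failure occurs for objectives that (B-WRP) itself produces, not merely for abstract submodular $f$: with $\Gamma^j = 1$, $\underline{c}^j_i \equiv 0$, $d^j_i \in \{0,1\}$ and $p_j \equiv 1$, the objective becomes the coverage function $f(X) = \bigl|\bigcup_{i\in X} S_i\bigr|$ with $S_i = \{j : d^j_i = 1\}$, and minimizing it over the $k$-element subsets of $[n]$ is Min-$k$-Union, which is NP-hard via the densest-subgraph reduction you sketch. Since the $k$-element subsets are the bases of a uniform matroid --- i.e., precisely the ``weighted robust selection problem'' the paper lists as an application of this corollary --- the statement is false as written (unless P $=$ NP), not merely insufficiently proved. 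The corollary survives only in regimes already covered elsewhere: constant $N$, via the $(n+1)^N$ enumeration of threshold values in the preceding theorem (the hard instances above need $N$ to grow), or the trivial reading over independent sets. Your Lov\'asz-extension route fails for exactly the reason you give: the continuous minimum over the base polytope need not be attained at a vertex, and your hardness argument shows that no polynomial rounding scheme can close that gap in general. There is nothing to repair in your reasoning; the error is the paper's.
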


Corollary~\ref{submodcor} applies to, e.g., the weighted robust spanning tree problem, or the weighted robust selection problem.
 
Note that these results also apply to combinations of interval and budgeted uncertainty sets.

Finally, we consider the approximability of problems with discrete uncertainty sets. It is a well-known result (see, e.g., \cite{aissi2009min,chassein2018scenario}) that for classic robust combinatorial optimization problems with discrete uncertainty $\cU=\{\pmb{c}^1,\ldots,\pmb{c}^K\}$, an optimal solution to the midpoint scenario $\frac{1}{K}\sum_{k\in[K]} \pmb{c}^k$ gives a $K$-approximation. We extend this result to mixed discrete uncertainty sets.

Let $\cU^j=\{\pmb{c}^{1,j},\ldots,\pmb{c}^{K_j,j}\}$ for all $j\in[N]$. We construct the mixed-uncertainty midpoint scenario
\[ \hat{\pmb{c}} = \sum_{j\in[N]} p_j\left(\frac{1}{K_j}\sum_{k\in[K_j]} \pmb{c}^{k,j} \right)\]
and set $K^{max} = \max_{j\in[N]} K_j$.
\begin{theorem}
An optimal solution to problem (P) with costs $\hat{\pmb{c}}$ is a $K^{max}$-approximation to (WRP) with mixed discrete scenarios.
\end{theorem}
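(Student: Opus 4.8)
The plan is to mirror the classical midpoint argument for a single discrete set and lift it to the weighted sum, being careful that the resulting approximation factor is the single uniform constant $K^{max}$ rather than the individual cardinalities $K_j$. I would write $\bar{\pmb{c}}^j = \frac{1}{K_j}\sum_{k\in[K_j]}\pmb{c}^{k,j}$ for the midpoint of the $j$-th set, so that $\hat{\pmb{c}} = \sum_{j\in[N]} p_j \bar{\pmb{c}}^j$, and abbreviate the (WRP) objective as $f(\pmb{x}) = \sum_{j\in[N]} p_j \max_{k\in[K_j]} \pmb{c}^{k,j}\pmb{x}$.

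The first step is to record two pointwise bounds valid for every $\pmb{x}\in\X$ and every $j\in[N]$. Since a maximum of numbers is never below their average, $\bar{\pmb{c}}^j\pmb{x} \le \max_{k\in[K_j]}\pmb{c}^{k,j}\pmb{x}$. Since a maximum of nonnegative numbers is at most their sum, $\max_{k\in[K_j]}\pmb{c}^{k,j}\pmb{x} \le \sum_{k\in[K_j]}\pmb{c}^{k,j}\pmb{x} = K_j\,\bar{\pmb{c}}^j\pmb{x} \le K^{max}\,\bar{\pmb{c}}^j\pmb{x}$, where the final inequality uses $K_j \le K^{max}$ together with $\bar{\pmb{c}}^j\pmb{x}\ge 0$. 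This second chain is the only place where nonnegativity of the cost scenarios (and of $\pmb{x}\in\{0,1\}^n$) is used: dropping the non-maximal terms from a sum can only decrease it when the terms are nonnegative.

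With these bounds in place, the guarantee follows by chaining them around the optimality of the midpoint solution. Let $\pmb{x}^\star$ minimise $\hat{\pmb{c}}\pmb{x}$ over $\X$ and let $\pmb{x}^{opt}$ be optimal for (WRP). Multiplying the upper bound by $p_j\ge 0$, summing over $j$, and factoring out $K^{max}$ gives $f(\pmb{x}^\star) \le K^{max}\,\hat{\pmb{c}}\pmb{x}^\star$. Optimality of $\pmb{x}^\star$ for the midpoint costs yields $\hat{\pmb{c}}\pmb{x}^\star \le \hat{\pmb{c}}\pmb{x}^{opt}$. Finally, multiplying the lower bound by $p_j$ and summing gives $\hat{\pmb{c}}\pmb{x}^{opt} = \sum_{j\in[N]} p_j \bar{\pmb{c}}^j\pmb{x}^{opt} \le f(\pmb{x}^{opt})$. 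Concatenating these three relations produces $f(\pmb{x}^\star)\le K^{max}\,f(\pmb{x}^{opt})$, which is exactly the claimed $K^{max}$-approximation.

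There is no serious obstacle here; the argument is routine once the correct uniform factor is identified. The one point I would make explicit is why the factor is $K^{max}$ and not some weighted blend of the $K_j$: the weights $p_j$ enter the robust objective and the midpoint objective in the same way and therefore cancel in the ratio, so the entire loss is controlled by the worst (largest) cardinality through the per-set upper bound $\max_k \pmb{c}^{k,j}\pmb{x} \le K_j\,\bar{\pmb{c}}^j\pmb{x}$. I would also flag the standing assumption that the cost scenarios are nonnegative, since the upper-bound step fails without it.
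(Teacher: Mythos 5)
Your proof is correct and follows essentially the same route as the paper's: both use the chain average $\le$ max $\le$ sum $= K_j \cdot$ average $\le K^{max}\cdot$ average per set, combined with optimality of the midpoint solution; the paper simply writes it as one displayed chain of inequalities rather than isolating the two pointwise bounds first. Your explicit remark that nonnegativity of the scenarios is needed for the max-$\le$-sum step is a worthwhile clarification that the paper leaves implicit.
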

\begin{proof}
Let $\hat{\pmb{x}}$ be an optimal solution to $\hat{\pmb{c}}$, and let $\pmb{x}^*$ be an optimal solution to (WRP). Then
\begin{align*}
\sum_{j\in[N]}p_j\max_{k\in[K_j]} \pmb{c}^{k,j}\hat{\pmb{x}} & \le \sum_{j\in[N]} p_j \left(\sum_{k\in[K_j]} \pmb{c}^{k,j}\hat{\pmb{x}}\right) \\
&\le K^{max} \sum_{j\in[N]} p_j \left(\frac{1}{K_j} \sum_{k\in[K_j]} \pmb{c}^{k,j}\hat{\pmb{x}}\right) \\
&\le K^{max} \sum_{j\in[N]} p_j \left(\frac{1}{K_j} \sum_{k\in[K_j]} \pmb{c}^{k,j}\pmb{x}^*\right) \\
&\le K^{max} \sum_{j\in[N]} p_j \max_{k\in[K_j]} \pmb{c}^{k,j}\pmb{x}^* \\
\end{align*}
Hence, the objective of solution $\hat{\pmb{x}}$ in problem (WRP) is at most $K^{max}$ times the objective of an optimal solution.
\end{proof}

\section{Tuning Experiments}
\label{sec:exp}
Algorithms which solve optimization problems involve a number of parameters, and these parameters can be carefully tuned so that the performance of the algorithms is optimized. For this purpose, a tuning tool to automatically configure optimization algorithms called \textit{irace} \cite{lopez2011irace} has been developed.
We use \textit{irace} to find the best-performing combination of uncertainty sets for (WRP)
a given set of instances of the robust shortest path problem.

\subsection{Experimental Setup}
We used the same real-world shortest path data as first introduced in \cite{CHASSEIN2018}. The graph models the City of Chicago, and consists of 538 nodes and 1308 arcs. The \textit{morning} data set was used to model the problem uncertainty, containing 271 scenarios that represent morning rush hours during week days. Each scenario contains the travel speed for each arc.

In our experiments, three uncertainty sets, ellipsoidal, convex hull and interval, are used in generating the mixed uncertainty sets. Additionally, we set $N=3$, meaning that we combine up to three uncertainty sets; however, the tuning algorithm can also choose less. We refer to these three sets as the \textit{parent} uncertainty sets. The choice of parent uncertainty sets is driven by computational efficiency and relative performance of uncertainty sets demonstrated in \cite{CHASSEIN2018}. It is possible that two parent sets are of the same type, e.g., a combination that uses two convex hull uncertainty sets and one ellipsoidal uncertainty set.
Corresponding to each parent set, we have a scaling parameter $\lambda$, which takes up values in pre-defined intervals (including the interval limits). The intervals are so chosen to reflect a reasonable range of choices for a decision maker.
For ellipsoidal uncertainty we use $\lambda \in [0, 20]$, while we use $\lambda \in [0, 1]$ for interval and convex hull uncertainty.
Parameter $\lambda$ represents the size of the uncertainty set, in relation to the observed scenarios. For a formal parameter definition, we refer to \cite{CHASSEIN2018}.

Moreover, we also associate a weight $p_j$ to each parent set which corresponds to the significance of the parent in the mixed uncertainty set and, therefore, in the objective function. 

In our experiments, our objective is to find a path that is robust when driving during morning rush hours, modeled through 271 scenarios. We use 75\% of the scenarios (in-sample data) to construct the uncertainty sets, and we evaluate solutions in-sample and out-sample separately. To this end, we generate 600 random \textit{s-t} pairs over the node set, which fulfill a minimum distance criterion to avoid nodes that are close to each other.
To find a balanced evaluation of all methods, we use three performance criteria: (1) The average objective function value over all \textit {s-t} pairs and all scenarios, denoted as Avg.
(2) The average of the worst-case objective function value for each \textit {s-t} pair, denoted as Max.
(3) The average value of the worst 5\% of objective values for each \textit {s-t} pair (as in the conditional value at risk measure), denoted as CVaR. 
 We see that each evaluation is comprised of three performance measures, i.e., each solution is assigned a three-dimensional objective vector. 
 
 For \textit{irace} to tune the algorithmic parameters of mixed uncertainty sets, the algorithm must return the cost function as a single value. Hence, three additional parameters were defined, with each parameter being a weight to each performance measure in the objective vector. The cost function then becomes a weighted sum of the performance measures. However, it is important to note that these three weights are user-defined and not automatically configured by \textit{irace}. The three weights corresponding to their performance measures are sampled from the interval $[0, 1]$ with a step size of $0.1$, and only those values are retained that sum up to one (which makes a total of 66 such combinations of weights). Each combination results in an algorithm, and therefore, a solution for the shortest path problem using the mixed uncertainty sets. Hence, in total, for mixed uncertainty sets, we have nine automatically configurable parameters and three fixed user-defined parameters. We used a fixed computational budget of 10,000 experiments for a given \textit{irace} run. 
 
As a comparison, we generate 41 possible values for the scaling parameter $\lambda$ when each parent uncertainty set is separately used to compute solutions to the robust shortest path problem; this does not involve any parameter tuning as solutions are computed for different sizes of the parent uncertainty sets. The 41 values are equidistant, i.e., for ellipsoidal sets we use a step size of $0.5$ for $\lambda$, whereas for interval and convex hull we use a step size of $0.025$. We use the 41 models obtained for each parent set to compare the performance of mixed set solutions with the performance of the solutions delivered by the pure parent sets.

\subsection{Results}
We present the performance of the uncertainty sets, both mixed and parent, in Figures~\ref{fig:Max-vs-Avg} and \ref{fig:CVaR-vs-Avg}. 
As mentioned earlier, a total of 41 objective space vectors are obtained for each parent uncertainty set, where as a total of 66 objective space vectors are obtained for the mixed uncertainty sets. Each objective space vector among the 66 vectors for mixed uncertainty sets corresponds to a unique best parametric configuration obtained for different combinations of the weights of the performance measures. Moreover, each element (performance measure) of the objective space vector corresponding to a unique configuration is an average of all the values of that performance measure taken over all the scenarios and \textit{s-t} pairs. This also holds true for the solutions obtained using the parent uncertainty sets. Figure~\ref{fig:Max-vs-Avg} shows the trade-off curves between the Avg and the Max performance measures for both in-sample and out-sample data, while Figure~\ref{fig:CVaR-vs-Avg} shows the trade-off curves between the Avg and the CVaR performance measures for both in-sample and out-sample data. All the values are in unit of minutes; a lower value indicates better performance. Hence, good trade-off solutions should move from top left to the bottom right of the curves. Some values for interval sets are not visible, as they are outside the plotted ranges.

Figure~\ref{fig:in-max-avg} shows that for in-sample performance, convex hull solutions dominate all other solutions for the Avg vs Max trade-offs by construction; however, the mixed set solutions closely match the convex hull solutions. Interval solutions perform the worst among all, especially for higher values of the scaling parameter. Figure~\ref{fig:in-cvar-avg} shows that for in-sample performance, ellipsoidal solutions exhibit the best Avg vs CVaR trade-offs among all the solutions but are closely matched by the mixed set solutions. However, convex hull solutions perform worse than both ellipsoidal and mixed set, and interval solutions closely match the performance of the convex hull solutions. Summarily, both ellipsoidal and convex hull solutions do not exhibit stability across both the trade-off curves, i.e., while convex hull dominates the in-sample Avg vs Max curves, ellipsoidal exhibits best performance for in-sample Avg vs CVaR trade-offs. Mixed uncertainty set solutions exhibit stability across both in-sample Avg vs Max and in-sample Avg vs CVaR trade-offs as they closely match the respective performance of both convex hull and ellipsoidal solutions.  

\begin{figure}[htb]
\begin{center}
\subfigure[In-Sample\label{fig:in-max-avg}]{\includegraphics[width=0.45\linewidth]{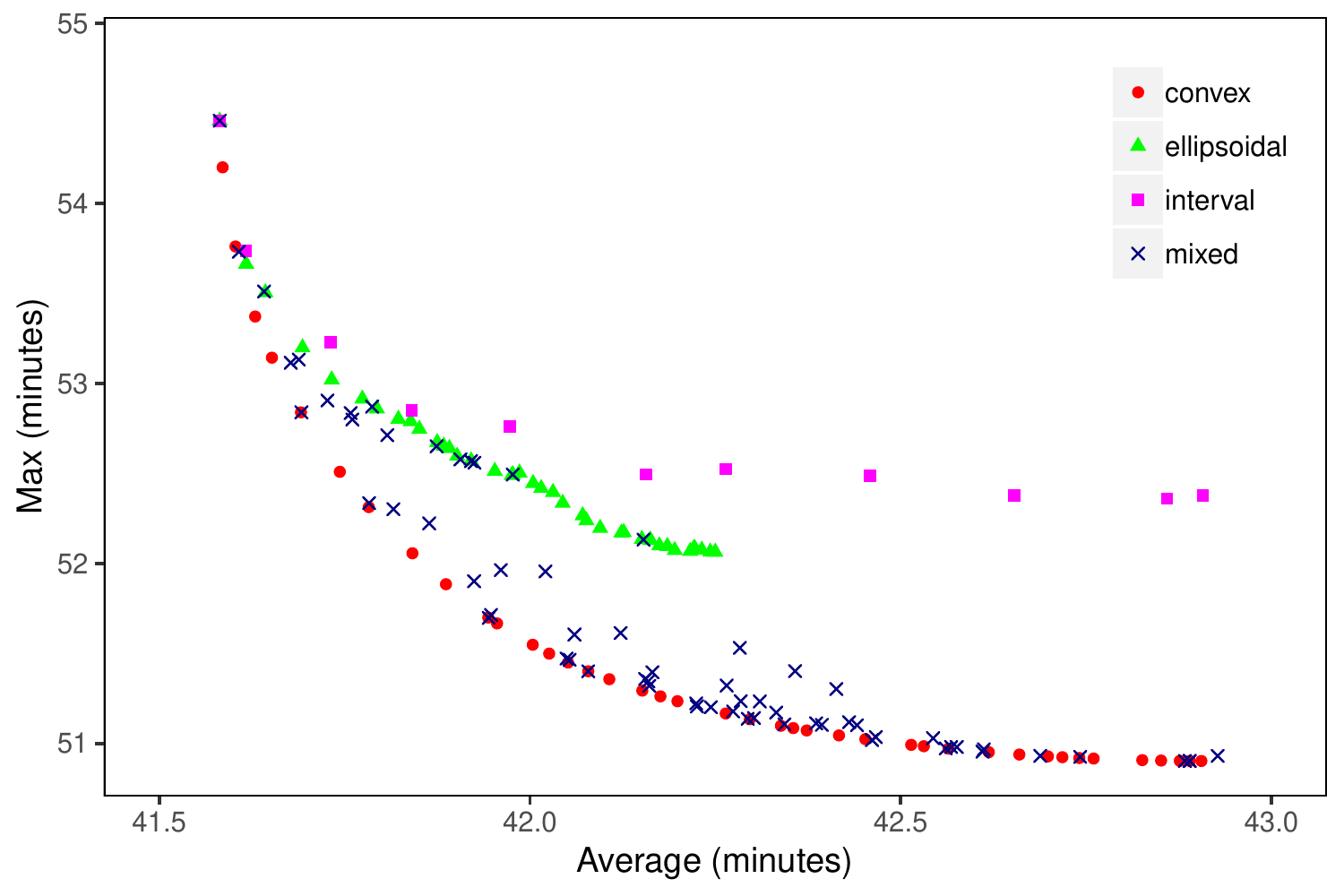}}
\subfigure[Out-Sample\label{fig:out-max-avg}]{\includegraphics[width=0.45\linewidth]{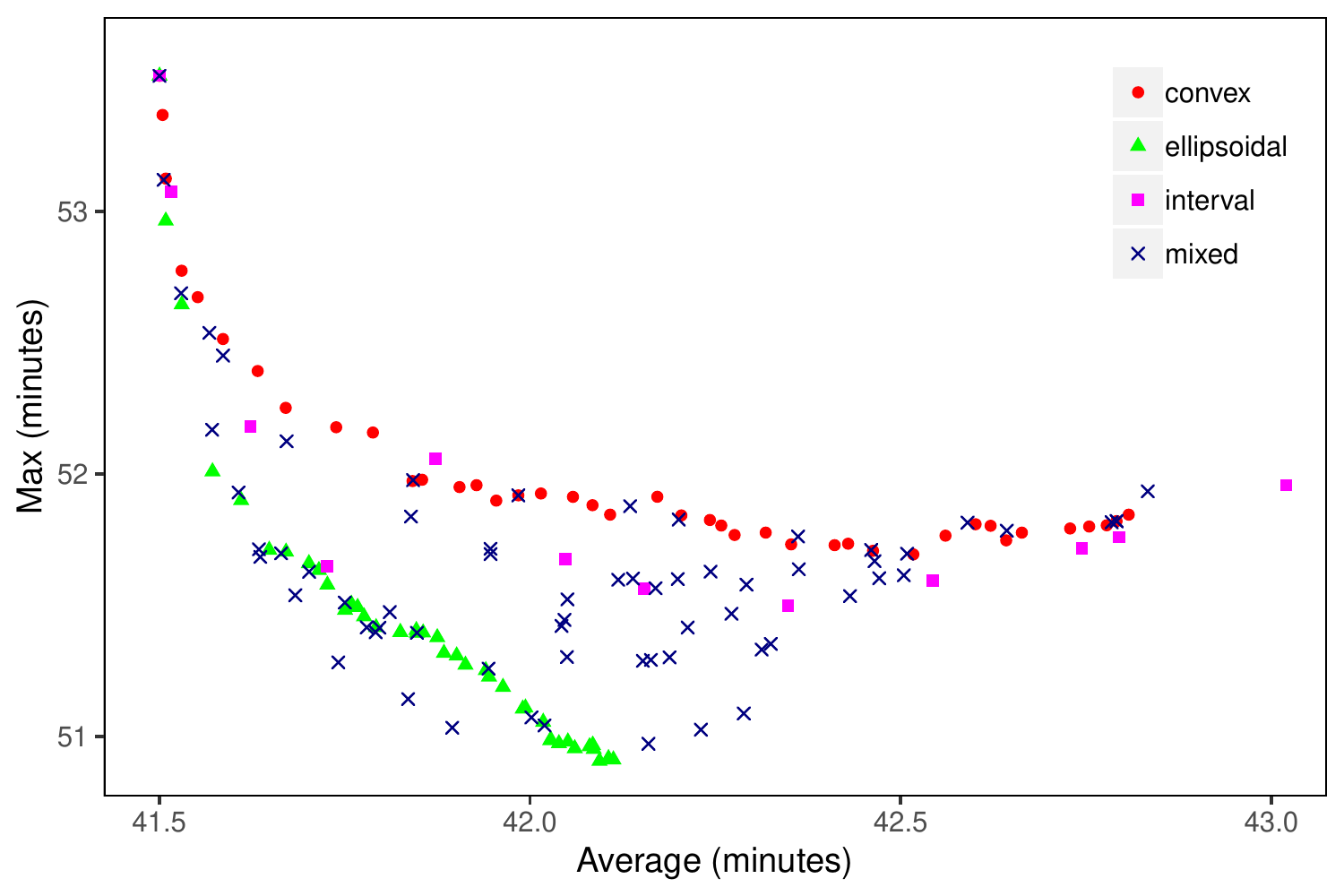}}
\caption{Average vs Worst-Case Performance}
\label{fig:Max-vs-Avg}
\end{center}
\end{figure}

\begin{figure}[htb]
\begin{center}
\subfigure[In-Sample\label{fig:in-cvar-avg}]{\includegraphics[width=0.45\linewidth]{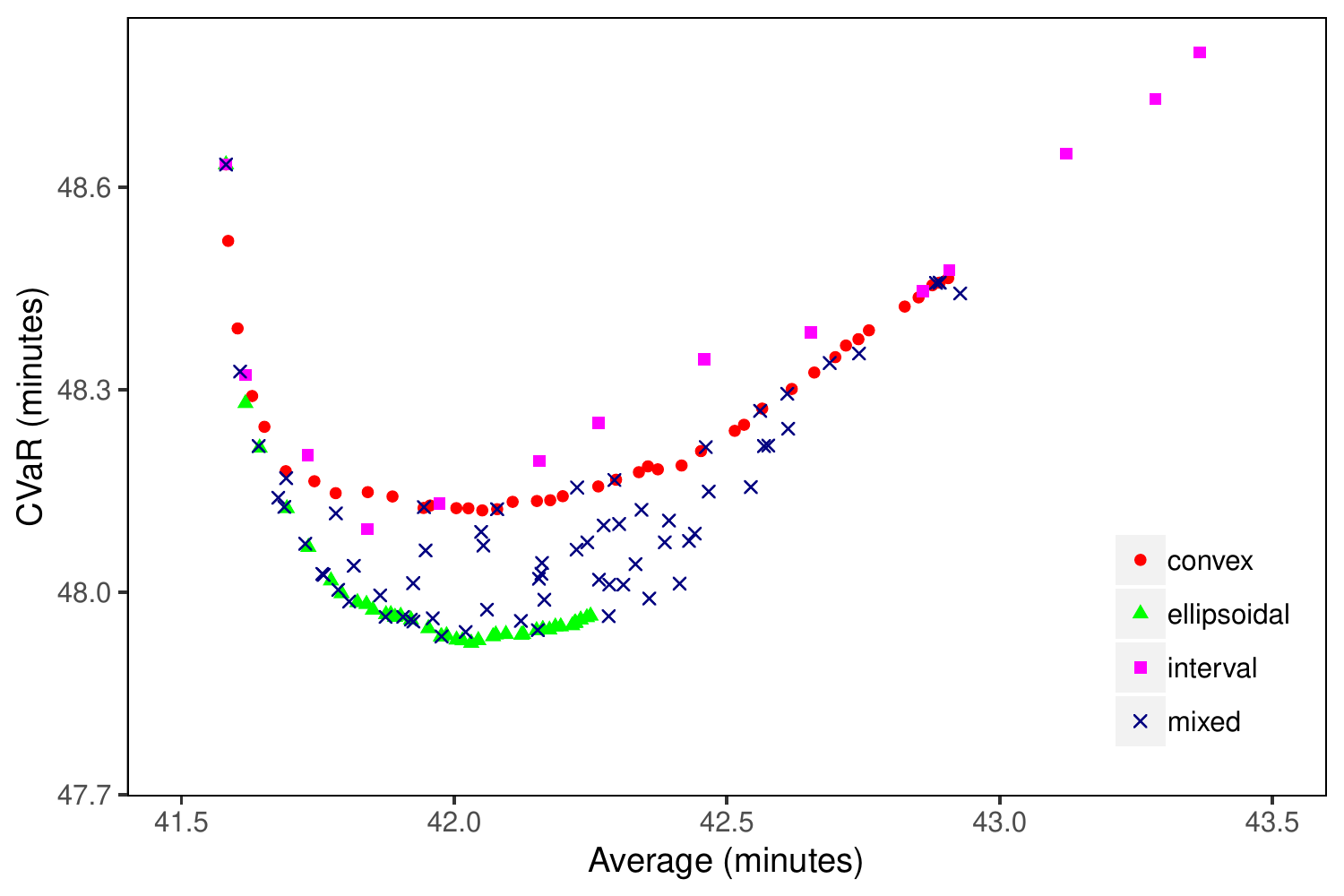}}
\subfigure[Out-Sample\label{fig:out-cvar-avg}]{\includegraphics[width=0.45\linewidth]{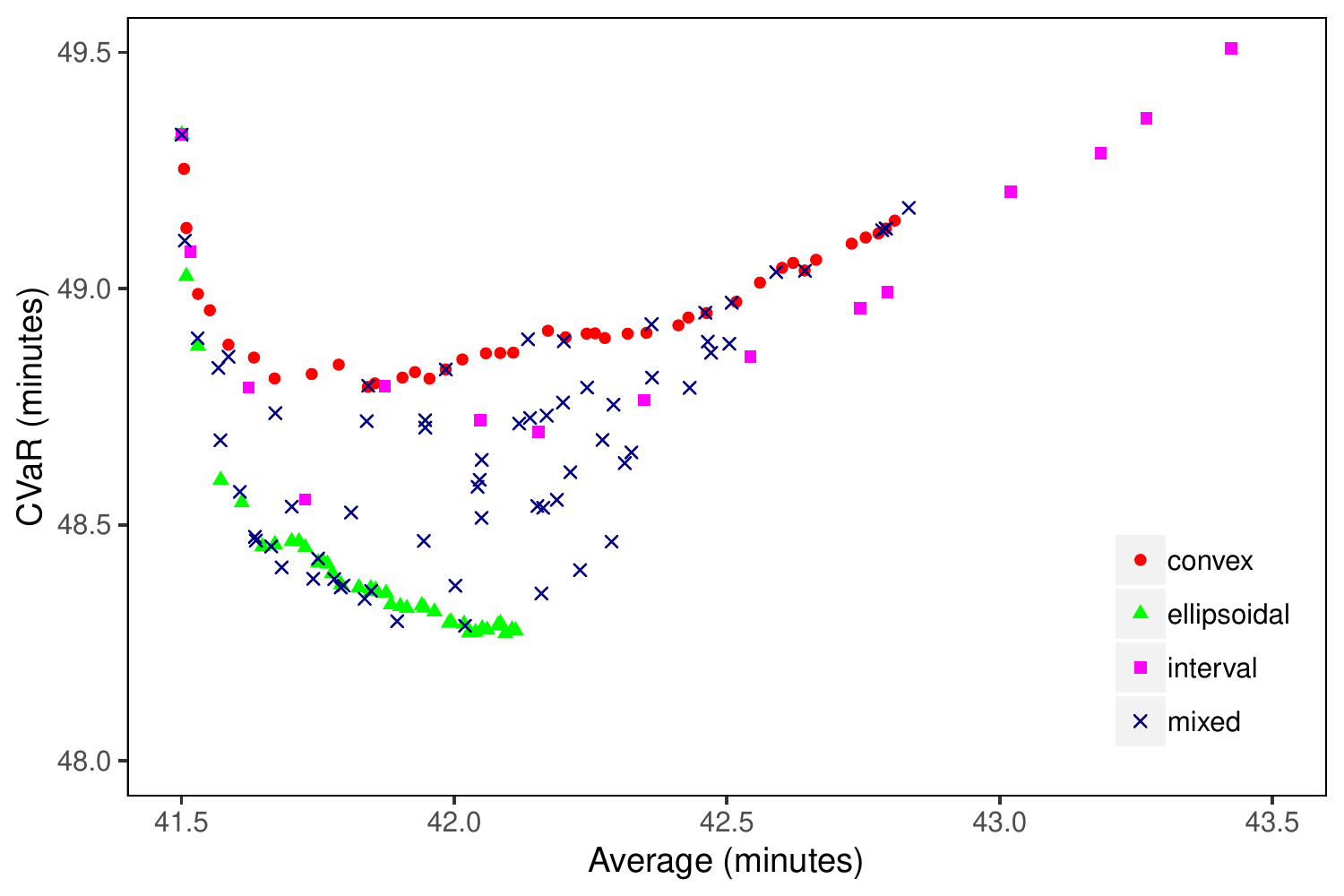}}
\caption{Average vs CVaR Performance}
\label{fig:CVaR-vs-Avg}
\end{center}
\end{figure}

When out-sample performance is considered for both Avg vs Max and Avg vs CVaR trade-offs (Figures~\ref{fig:out-max-avg} and \ref{fig:out-cvar-avg}), mixed set solutions can dominate all other solutions. Convex hull solutions perform even worse than interval solutions for both the trade-offs, implying that they are over-fitted to the data. The performance of ellipsoidal solutions improve from in-sample to out-sample compared to the performance of convex hull and interval solutions for Avg vs Max trade-offs, but lose their best trade-off performance for Avg vs CVaR only to mixed set solutions.

The key aspect to note is that, even though mixed solutions only use the three parent sets, their combination can outperform each separately, i.e., we can observe a synergy effect when mixing uncertainty sets. 

While the performance comparison among all the uncertainty sets help us establish that mixed uncertainty sets not only exhibit stability over both in-sample and out-sample data but also perform better than the parent sets for certain configurations, we do not observe how each uncertainty set performs for each \textit{s-t} pair separately, as we explained earlier that each solution delivered by an uncertainty set is an average of all the values taken over all the scenarios and \textit{s-t} pairs. As an instance, to understand how each uncertainty set performs for each \textit{s-t} pair, we choose a representative configuration for each uncertainty set to compare the performance. The description of the configurations which are compared can be found below.

\begin{itemize}
    \item Mixed Uncertainty Set: We use a mix of convex hull and ellipsoidal uncertainty. The scaling parameter, $\lambda$, and the weight parameter, $p_j$, of convex hull are 0.2234 and 0.7502 respectively. For the ellipsoidal set, the scaling and weight parameters take the values 5.4609 and 0.9796 respectively.
    \item Convex Hull: The scaling parameter is 0.2
    \item Ellipsoidal: The scaling parameter is 6.0
    \item Interval: The scaling parameter is 0.075
\end{itemize}

Figures~\ref{fig:Boxplot-Avg}, \ref{fig:Boxplot-Max}, \ref{fig:Boxplot-CVaR} show the comparison in performance for each measure (Avg, Max and CVaR), both in-sample and out-sample, for every pair of mixed set and parent set, i.e., we compare performance for mixed and convex hull sets, for mixed and ellipsoidal sets and for mixed and interval sets. This is achieved by taking the difference in the solutions delivered by the mixed set and the comparing parent set for each \textit{s-t} pair; all the values are in unit of minutes. While a negative value indicates that the solution delivered by the mixed set is better than the parent set, a positive value indicates the opposite. For the sake of clarity in the plots, we filter out the sample containing only zero values as they indicate that both the sets deliver exactly the same solution. In our case, for each pair of mixed and parent set, the number of \textit{s-t} pairs for which the solutions differed are as follows: 131 (21.8\%) for mixed and convex hull sets; 91 (15.2\%) for mixed and ellipsoidal sets; and 129 (21.5\%) for mixed and interval sets.

\begin{figure}[htb]
\begin{center}
\subfigure[In-Sample\label{fig:bplot-iavg}]{\includegraphics[width=0.45\linewidth]{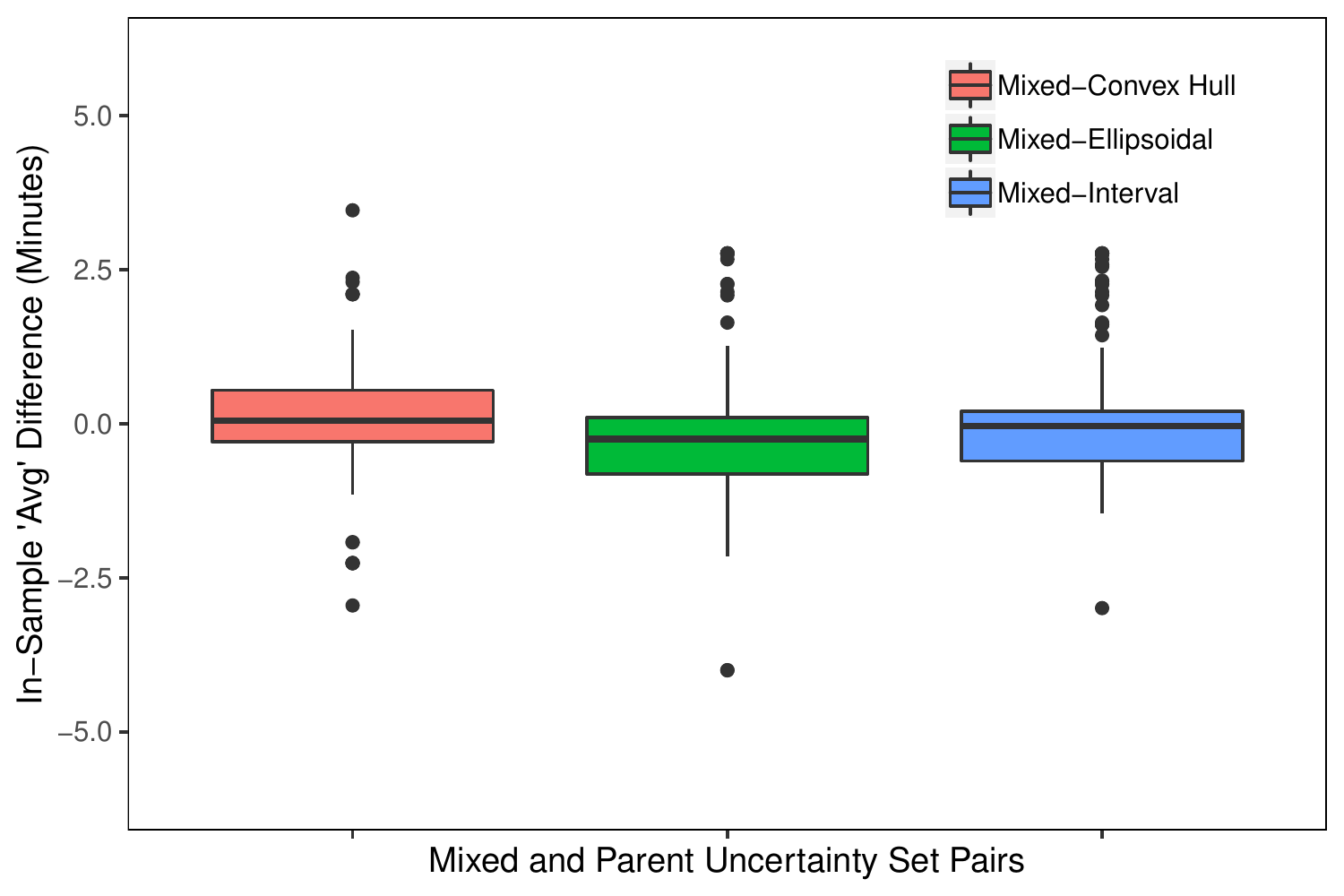}}
\subfigure[Out-Sample\label{fig:bplot-oavg}]{\includegraphics[width=0.45\linewidth]{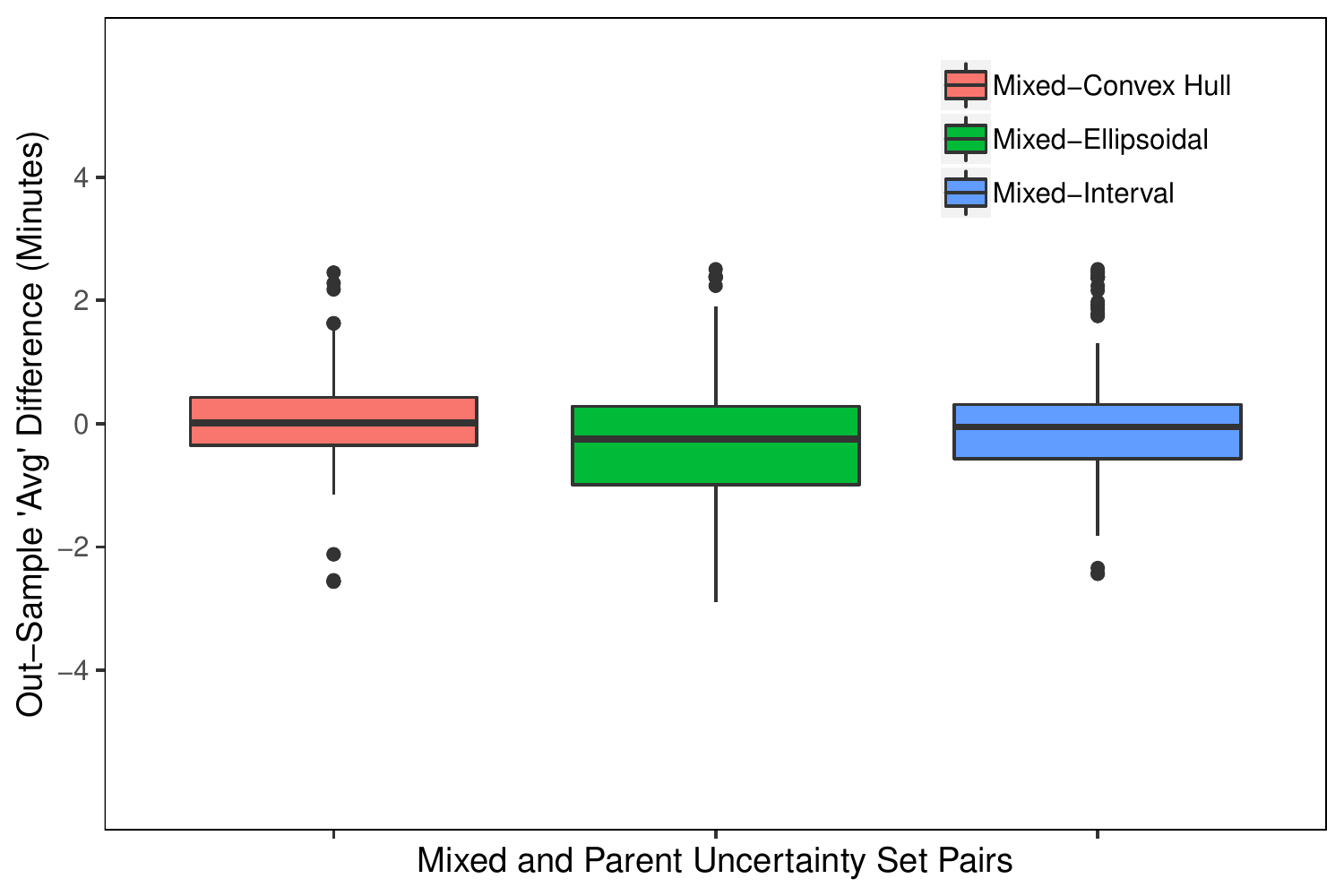}}
\caption{Difference in Average (Avg) Performance Measures}
\label{fig:Boxplot-Avg}
\end{center}
\end{figure}

\begin{figure}[htb]
\begin{center}
\subfigure[In-Sample\label{fig:bplot-imax}]{\includegraphics[width=0.45\linewidth]{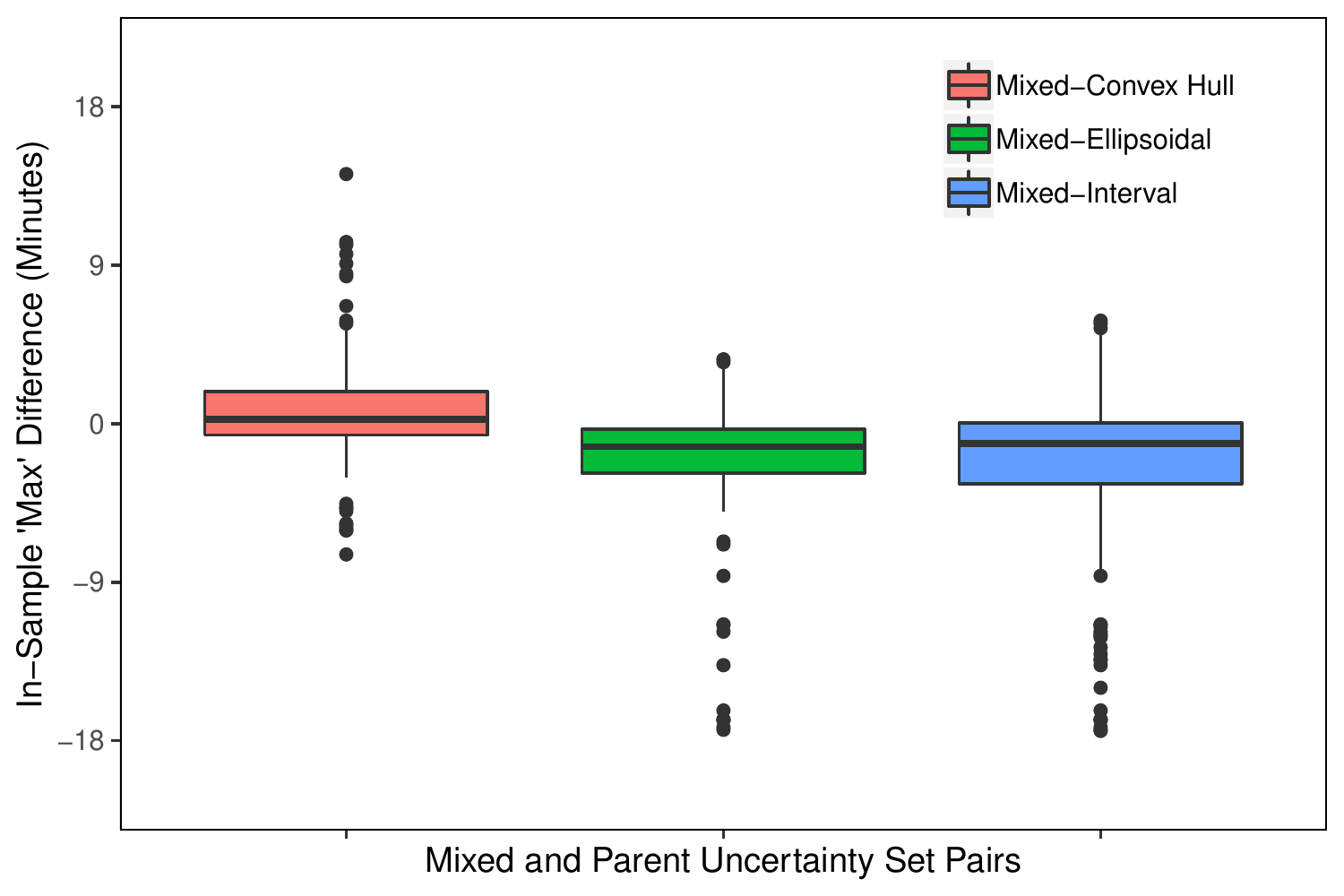}}
\subfigure[Out-Sample\label{fig:bplot-omax}]{\includegraphics[width=0.45\linewidth]{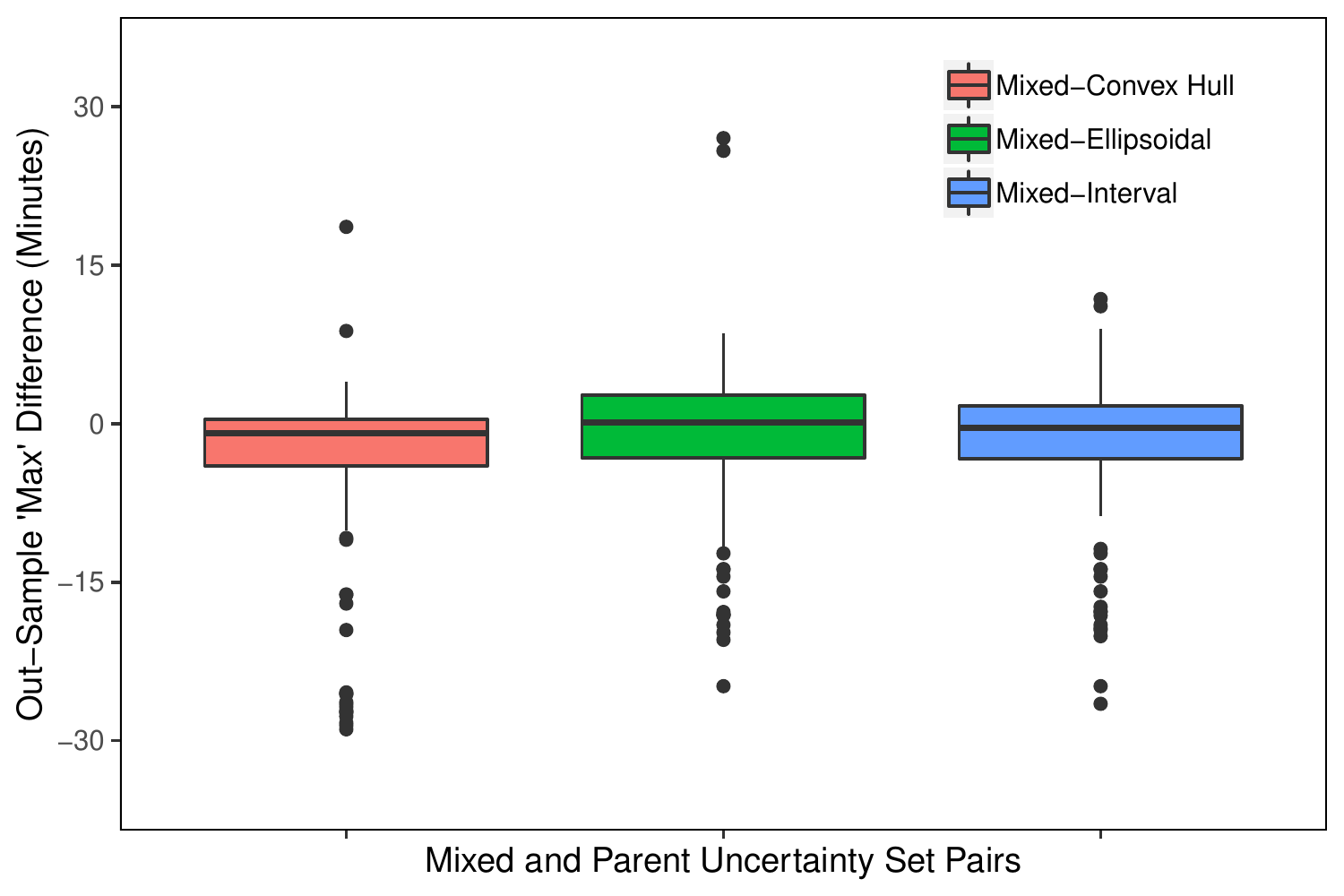}}
\caption{Difference in Worst-Case (Max) Performance Measures}
\label{fig:Boxplot-Max}
\end{center}
\end{figure}

\begin{figure}[htb]
\begin{center}
\subfigure[In-Sample\label{fig:bplot-icvar}]{\includegraphics[width=0.45\linewidth]{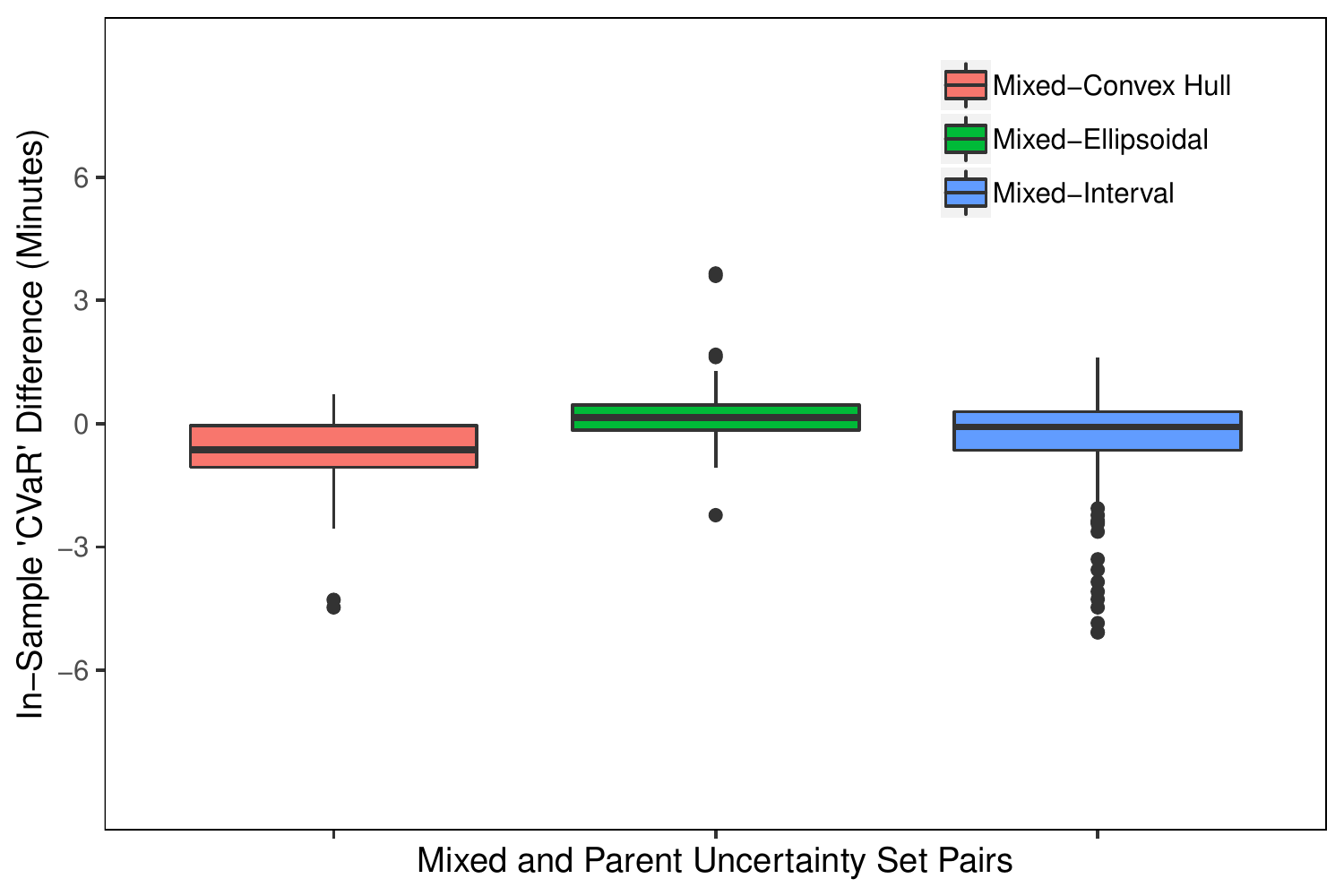}}
\subfigure[Out-Sample\label{fig:bplot-ocvar}]{\includegraphics[width=0.45\linewidth]{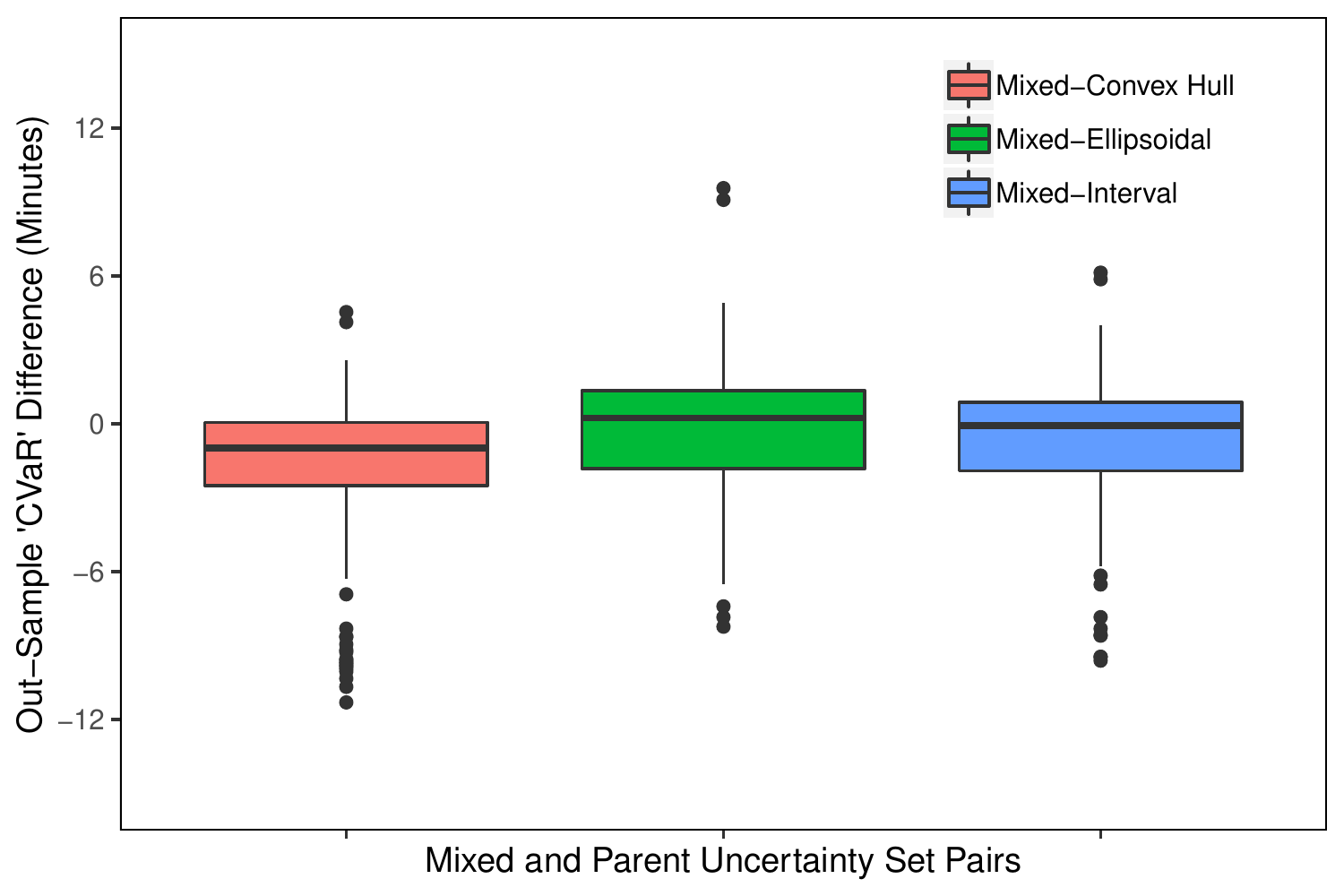}}
\caption{Difference in Average of Worst 5\% (CVaR) Performance Measures}
\label{fig:Boxplot-CVaR}
\end{center}
\end{figure}

Figure~\ref{fig:Boxplot-Avg} compares the Avg performance measure (both in-sample and out-sample) for each pair of mixed and parent set. We see that at least 50\% of the in-sample solutions given by convex hull is better than that given by the mixed uncertainty set. For ellipsoidal-mixed set and interval-mixed set pairs, at least 50\% of the in-sample solutions given by mixed set is better than those of the parent sets. Moreover, for in-sample performance, the difference in the values of the performance measures (margin) by which convex hull solutions are better is higher than the margin by which mixed set solutions are better. For the other two pairs of mixed and parent sets, mixed set performs better by higher margins than by the margin when it performs worse. For out-sample performance, mixed set solutions still dominate the solutions of ellipsoidal and interval, in both number and value. However, convex hull solutions although dominate for at least 50\% of the solutions, the mixed set solutions dominate in margin of performance, i.e. when mixed set solutions are better for out-sample data, they are better by a higher margin.

Similar analysis for the Max performance measure from Figure~\ref{fig:Boxplot-Max} leads us to conclude that while at least 50\% of the in-sample solutions given by convex hull is not only better than those given by mixed set but also better by higher margin, for out-sample data, mixed set solutions dominate convex hull solutions, in both number and margin. For ellipsoidal-mixed set and interval-mixed set pairs, in-sample solutions given by mixed set is better than those of the parent sets, in both number and margin. For out-sample performance, mixed set solutions still dominate the solutions of interval set, in both aspects, but now ellipsoidal solutions are found to be slightly dominating the mixed set solutions, in both aspects.
Figure~\ref{fig:Boxplot-CVaR} compares the CVaR performance measure (both in-sample and out-sample) for each pair of mixed and parent set. We see that mixed set solutions dominate the convex hull and interval solutions in both aspects of number and margin across both in-sample and out-sample data. However, ellipsoidal solutions are found to be clearly dominant over the mixed set solutions, in both number and margin, across both in-sample and out-sample data. 

We summarize our findings from our experiment on the robust shortest path problem with real-world data.

Convex hull solutions show good in-sample performance for Avg vs Max trade-offs, but are not stable when facing out-sample scenarios. The in-sample performance of convex hull solutions is closely matched by the mixed set solutions. Convex hull solutions perform worse for Avg vs CVaR trade-offs across both in-sample and out-sample scenarios. 

Interval solutions do not perform well in general, but are easy to find and can be a reasonable approach for smaller values of the scaling parameter.

Ellipsoidal solutions exhibit stability over both in-sample and out-sample performance for both Avg vs Max and Avg vs CVaR trade-offs and offer a good approach over a wide range of the scaling parameter. In addition, they also deliver the best CVaR performance across both in-sample and out-sample data among all the uncertainty sets. However, solutions given by ellipsoidal set are dominated by the mixed set solutions in the region of best trade-offs for both in-sample and out-sample scenarios when averaged over all the \textit{s-t} pairs and scenarios. 

Mixed set solutions closely match the in-sample performance of convex hull and ellipsoidal solutions for Avg vs Max and Avg vs CVaR trade-offs respectively, but they dominate all the other solutions when facing out-sample scenarios, especially in the region of best trade-offs, i.e., mixed set solutions are found to deliver the best solutions among all the uncertainty sets for certain configurations. Besides, mixed set solutions exhibit stability across both in-sample and out-sample scenarios. 

Mixed set solutions do not always deliver the best solution compared to the parent sets for each \textit{s-t} pair when we consider the best trade-offs among the performance measures; but when the mixed sets give better solutions than the parent sets, the margin by which they are better is much higher than the margin by which they are worse when they under-perform compared to the parent sets. This makes the average value of the solutions given by mixed sets over all the \textit{s-t} pairs and scenarios better than the parent sets, and hence, make them a better option to find better trade-offs among the objective vector elements.

\section{Conclusion}
\label{sec:conclusions}

In this paper we have proposed a mixed uncertainty set approach to robust combinatorial optimization. Our results give a strong evidence in support of mixed sets giving superior solutions compared to individual approaches. This evidence paves way to further investigation into developing efficient algorithmic framework for building such mixed sets. For example, as an immediate future extension of this work, one option is to consider a Bayesian type approach, instead of a black-box optimizer like \textit{irace}, which updates the weight on each parent set under consideration with every new bit of data collected.

\end{document}